  \def\su#1{{\sp{[#1]}}}
  \def\<{{\langle}} 
  \def\>{{\rangle}}
  \def\note#1{{}}
  \def\note#1{} 
     \def\cT{{\mathcal T}}
  \def\rhom#1#2#3{{{\rm Hom}\sb{#1}(#2,#3)}}
  \def\beq{\begin{equation}} 
  \def\eeq{\end{equation}}
  \def\id{\mathrm{id}}
  \def\ot{{\otimes}}
  \newcounter{zlist} 
  \newenvironment{zlist}{\begin{list}{(\arabic{zlist})}{ 
  \usecounter{zlist}\leftmargin2.5em\labelwidth2em\labelsep0.5em 
  \topsep0.6ex
  \parsep0.3ex plus0.2ex minus0.1ex}}{\end{list}}
  \newcounter{blist}
  \newcounter{rlist}
\def\stac#1{\raise-.2cm\hbox{$\stackrel{\displaystyle\otimes}{\scriptscriptstyle{#1}}$}}
\def\cten#1{\raise-.2cm\hbox{$\stackrel{\displaystyle\widehat{\otimes}}
{\scriptscriptstyle{#1}}$}}
  \def\Label#1{\label{#1}\ifmmode\llap{[#1] }\else 
  \marginpar{\smash{\hbox{\tiny [#1]}}}\fi} 
  \def\Label{\label}
  \newtheorem{proposition}{Proposition}[section]
  \newtheorem{lemma}[proposition]{Lemma}
\theoremstyle{definition} 
  \newtheorem{definition}[proposition]{Definition}
\newtheorem{example}[proposition]{Example}
  \theoremstyle{remark} 
  \newtheorem{remark}[proposition]{Remark}
  \newcounter{c} 
  \newcommand{\etyk}[1]{\vspace{-7.4mm}$$\begin{equation}\Label{#1} 
  \addtocounter{c}{1}} 
  \renewcommand{\]}{\ifnum \value{c}=1 $$\else \end{equation}\fi} 
\def\ot{\otimes}
\def\KK{{\mathbb K}}
\def\NN{{\mathbb N}}
\newcommand{\Cc}{\mathcal{C}}
\newcommand{\Tt}{\mathcal{T}}
\def\*C{{}^*\hspace*{-1pt}{\Cc}}
\def\text#1{{\rm {\rm #1}}}
 \def\1{\mathbf{1}}
\begin{document}

\title{Curved Rota-Baxter systems}

\author{Tomasz Brzezi\'nski}
 \address{Department of Mathematics, Swansea University,   Swansea SA2 8PP, U.K.}
 \address{ 
Department of Mathematics, University of Bia{\l}ystok, K.\ Cio{\l}kowskiego  1M,
15-245 Bia{\l}ystok, Poland}
  \email{T.Brzezinski@swansea.ac.uk}   
 \subjclass[2010]{16T05;16T25} 
 \keywords{}
 
\begin{abstract}
Rota-Baxter systems are modified by the inclusion of a curvature term. It is shown that, subject to specific properties of the curvature form, curved Rota-Baxter systems $(A,R,S,\omega)$ induce associative and (left) pre-Lie products on the algebra $A$. It is also shown that if both Rota-Baxter operators coincide with each other and the curvature is $A$-bilinear, then the (modified by $R$) Hochschild cohomology ring over $A$ is a curved differential graded algebra.
\end{abstract}
\maketitle

\section{Introduction}

Rota-Baxter operators appeared in the work of Baxter \cite{Bax:ana} on differential operators on commutative Banach algebras, being particularly useful in relation to the Spitzer identity. The defining identity of a (weight zero) Rota-Baxter operator can be understood as encoding the integration by part law in the way analogous to that in which the Leibniz rule characterises differentiation. Rota and his school realised the usefulness of such operators in combinatorics in particular in the context of Warning's formula relating the power sum symmetric functions to  elementary symmetric functions \cite{Rot:Bax}. Aguiar connected Rota-Baxter operators with Yang-Baxter operators and, inspired by this connection, introduced infinitesimal bialgebras \cite{Agu:inf}. Furthermore, a relation of Rota-Baxter algebras to dendriform algebras of Loday \cite[Section~5]{Lod:dia} was explored in \cite{Ebr:Lod}, \cite{Agu:info}. Through their employment in combinatorics on one hand and connection to the Yang-Baxter equation on the other, Rota-Baxter algebras found their way into mathematical physics, in particular the renormalisation of quantum field theories \cite{EbrGuo:Rot} and, most recently, integrable systems \cite{Sza:cla}. For a short and accessible review of Rota-Baxter algebras the reader is referred to \cite{Guo:wha}.

In an attempt to develop and extend aformentioned connections between Rota-Baxter algebras, dendriform algebras and infinitesimal bialgebras the notion of a {\em Rota-Baxter system} has been introduced in \cite{Brz:Rot}.  In particular it has been shown that to any Rota-Baxter system one can associate a  dendriform algebra  and, in fact, any dendriform algebra of a particular kind arises from a Rota-Baxter system. Consequently, Rota-Baxter systems yield pre-Lie and associative algebra structures. Furthermore, in parallel to the relation between the Rota-Baxter identity and the integration by parts law, an example of a Rota-Baxter system termed a twisted Rota-Baxter operator, has been shown to satisfy the integration by parts law of the Jackson q-integral.

In this note we modify the definition of a Rota-Baxter system by including a curvature term and then derive the conditions that the curvature has to satisfy in order to yield a pre-Lie, associative or curved differential graded algebra structures.

This note deals with the properties of an algebraic system consisting of an algebra and three maps, which satisfy properties listed in the following

\begin{definition}\label{def.R-B}
A system $(A,R,S,\omega)$ consisting of an associative (but not necessarily unital) algebra $A$ over a commutative ring $\KK$ and $\KK$-linear maps $R,S:A\to A$, $\omega: A\ot A\to A$ is called a {\em curved Rota-Baxter system} if, for all $a,b\in A$,
\begin{equation} \label{r}
R(a)R(b) = R\left(R(a) b + a S(b)\right) + \omega(a\otimes b),
\end{equation}
\begin{equation} \label{s}
S(a)S(b) = S\left(R(a) b + a S(b)\right) + \omega(a\otimes b). 
\end{equation}
The maps  $R$ and $S$ are termed {\em Rota-Baxter operators} and $\omega$ is called a {\em curvature}.
\end{definition}

Curved Rota-Baxter systems generalise Rota-Baxter operators and algebras at least in a threefold way. First, when the curvature vanishes, the triple $(A,R,S)$ is a Rota-Baxter system and hence the choice of $S$ to be $R +\lambda\id$ with  $\lambda\in \KK$ or $R$ to be $S+\lambda\id$ makes it into a Rota-Baxter algebra of weight $\lambda$. On the other hand, a Rota-Baxter algebra of weight $\lambda$ is obtained from  $(A,R,S,\omega)$ by setting $R=S$ and $\omega(a\ot b) = \lambda R(ab)$, for all $a,b\in A$.


\section{Curved Rota-Baxter systems and associative  algebras} 
As explained in \cite{Brz:Rot}, a Rota-Baxter $(A,R,S)$ system yields an  associative product on $A$. This remains true for a curved Rota-Baxter system provided the curvature satisfies specific condition.

\begin{proposition}\label{prop.ass}
Let $(A,R,S,\omega)$ be a curved Rota-Baxter system and define, for all $a,b\in A$,
\begin{equation}\label{prod}
a *b = R(a) b + a S(b).
\end{equation}
Then $(A,*)$ is an associative algebra if and only if, for all $a,b,c\in A$,
\begin{equation}\label{ass}
a\omega(b\ot c) = \omega(a\ot b)c.
\end{equation}
In particular, if $A$ has an identity, then $(A,*)$ is an associative algebra if and only if there exists a central element $\kappa \in A$ such that, for all $a,b\in A$,
\begin{equation}\label{unital}
\omega(a\ot b) = \kappa ab.
\end{equation}
\end{proposition}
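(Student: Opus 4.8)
The proof has two parts: the iff for associativity in terms of condition \eqref{ass}, and then the refinement \eqref{unital} in the unital case. For the first part I would simply compute $(a*b)*c$ and $a*(b*c)$ using the definition \eqref{prod} and the Rota-Baxter system identities \eqref{r} and \eqref{s}. Expanding $a*b = R(a)b + aS(b)$ and applying $*$ again gives, using \eqref{r} to rewrite $R(R(a)b+aS(b))$ and \eqref{s} to rewrite $S(R(b)c+bS(c))$, a collection of terms; the associativity of the underlying product on $A$ should cancel everything except the curvature contributions. I expect $(a*b)*c - a*(b*c)$ to collapse exactly to $\omega(a\ot b)c - a\,\omega(b\ot c)$, so that $(A,*)$ is associative iff this vanishes for all $a,b,c$, which is \eqref{ass}.

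\textbf{The unital refinement.} Assume $A$ is unital with identity $1$. If $\omega(a\ot b)=\kappa ab$ with $\kappa$ central, then $a\,\omega(b\ot c) = a\kappa bc = \kappa abc = \omega(a\ot b)c$, so \eqref{ass} holds. Conversely, assume \eqref{ass}. Set $\kappa := \omega(1\ot 1)$. Putting $b=c=1$ in \eqref{ass} gives $a\kappa = \omega(a\ot 1)$; putting $a=b=1$ gives $\kappa c = \omega(1\ot c)$. To identify $\omega(a\ot c)$ in general, put $b=1$ in \eqref{ass}: $a\,\omega(1\ot c) = \omega(a\ot 1)c$, i.e.\ $a\kappa c = \omega(a\ot 1)c = a\kappa c$ — consistent but not yet enough. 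So I would instead need to show $\omega(a\ot c) = a\kappa c$. Here is where I would exploit \eqref{ass} more carefully: I suspect \eqref{ass} alone forces $\omega(a\ot c)$ to depend only on the product $ac$ and linearly through a central multiplier. Concretely, from \eqref{ass} with $c=1$ we get $a\,\omega(b\ot 1) = \omega(a\ot b)$, and $\omega(b\ot 1) = b\kappa$, giving $\omega(a\ot b) = ab\kappa$; symmetrically using $a=1$ in \eqref{ass} gives $\omega(b\ot c) = \omega(1\ot b)c = \kappa bc$. Comparing, $ab\kappa = \kappa ab$ for all $a,b$, and since $ab$ ranges over (a spanning set of) $A$, $\kappa$ is central, and $\omega(a\ot b) = \kappa ab$ as claimed.

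\textbf{Anticipated obstacle.} The only delicate point is the bookkeeping in the first computation: one must be careful to apply \eqref{r} to the $R$-terms and \eqref{s} to the $S$-terms at the right moment, since the associative product $*$ mixes $R$ and $S$ asymmetrically, and to check that the non-curvature terms genuinely cancel using only associativity in $A$ (not commutativity). A secondary subtlety in the unital part is that $A$ need not be commutative, so one must verify centrality of $\kappa$ rather than assume it; the argument above does this by playing the two specialisations of \eqref{ass} (setting the last vs.\ the first argument to $1$) against each other. I would write out the first computation in a single displayed alignment, then dispatch the unital case in a short paragraph as above.
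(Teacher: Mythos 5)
Your proposal is correct and follows essentially the same route as the paper: expand $(a*b)*c$ and $a*(b*c)$ using \eqref{r} and \eqref{s} so that all non-curvature terms cancel and the difference reduces to $a\omega(b\ot c)-\omega(a\ot b)c$ (you anticipate the opposite sign, which is immaterial), and in the unital case specialise \eqref{ass} at the identity to obtain $\omega(a\ot b)=\kappa ab$ with $\kappa=\omega(1\ot 1)$ and deduce its centrality. No gaps.
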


\begin{proof}
Note that, in terms of the product $*$, the curved Rota-Baxter conditions \eqref{r}--\eqref{s} can be equivalently written as 
$$
R(a)R(b) = R(a*b) + \omega(a\otimes b), \qquad S(a)S(b) = S(a* b) + \omega(a\otimes b).
$$
Therefore,
\begin{eqnarray*}
(a*b)*c - a*(b*c) &=& R(a*b)c + (a*b) S(c) - R(a) (b*c) - a S(b*c)\\
&=& R(a)R(b)c - \omega(a\otimes b) c + R(a)b S(c) + a S(b)S(c)\\
&& - R(a)R(b) c - R(a)b S(c) - aS(b)S(c) + a\omega(b\ot c)\\
&=& a\omega(b\ot c)- \omega(a\otimes b) c ,
\end{eqnarray*}
which yields the first assertion.

If $\omega$ is given by \eqref{unital} (with central $\kappa$), then it clearly satisfies condition \eqref{ass}. On the other hand, if $A$ is unital, then \eqref{ass} implies that, for all $a,b\in A$,
$$
\omega(a\ot b) = \omega(1\ot 1) ab,
$$
so that the curvature is fully determined by $\kappa:= \omega(1\ot 1)$. Again by the repeated use of \eqref{ass} one finds 
$$
a\kappa = a\omega (1\ot 1) = \omega(a\ot 1) = \omega(1\ot a) = \omega(1\ot 1) a = \kappa a,
$$
i.e.\ $\kappa$ is a central element and the stated form \eqref{unital} of the curvature is thus obtained.
\end{proof}

In a way similar to \cite[Lemma~2.9]{Brz:Rot}, one can analyse the associativity of the product \eqref{prod} from the perspective of {\em weak pseudotwistors} \cite{PanVan:twi}. The latter notion needs to be modified by introducing of curvature.

\begin{definition}\label{def.twistor}
Let $A$ be an algebra with associative product $\mu:A\ot A\to A$. A $\KK$-linear map $T:A\ot A\to A\ot A$ is called a {\em curved 
 weak pseudotwistor} if there exist  $\KK$-linear maps $\Tt:A\ot A\ot A\to A\ot A\ot A$ and $\omega:A\ot A\to A$, rendering commutative the following diagrams:
\begin{equation}\label{bow-tie}
\xymatrix{
A\!\ot\!  A\!\ot\!  A 
\ar[rrr]^-{\id \otimes \mu} &&& A\!\ot\!  A 
\ar[d]^{T}  &&& A\!\ot\!  A\!\ot\!  A \ar[lll]_-{\mu \ot \id} \\
A\!\ot\! A\!\ot\!  A \ar[u]^{\id \ot T} \ar[rrr]_{(\id \ot \mu)\circ \Tt - \id\ot \omega} & & &  A\!\ot\!  A  & & &  A\!\ot\!  A\!\ot\!  A \ar[u]_{T\ot \id}
\ar[lll]^{(\mu\ot \id)\circ \Tt - \omega\ot \id}} 
\end{equation}
\begin{equation}\label{omega}
\xymatrix{A\!\ot\!  A\!\ot\!  A 
\ar[rr]^-{\id \otimes \omega}\ar[d]_{\omega\ot \id} && A\!\ot\!  A 
\ar[d]^{\mu}\\
A\!\ot\!  A\ar[rr]_\mu && A\ .} 
\end{equation}
The map $\Tt$ is called a {\em weak companion} of $T$ and $\omega$ is called the {\em curvature} of $T$.
\end{definition}
\begin{lemma}\label{lemma.twistor}
Let $T:A\ot A\to A\ot A$ be a {\em curved 
 weak pseudotwistor} with companion $\cT$ and curvature $\omega$. Then $\mu\circ T$ is an associative product on $A$.
 \end{lemma}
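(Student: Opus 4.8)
The plan is to show that the new product $\mu'=\mu\circ T$ is associative by comparing $\mu'\circ(\mu'\ot\id)$ and $\mu'\circ(\id\ot\mu')$ on $A\ot A\ot A$ and verifying they agree. Both sides will be rewritten, using the defining diagrams of a curved weak pseudotwistor, into a common expression built out of $\mu$, $T$, $\Tt$ and $\omega$.

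First I would expand $\mu'\circ(\id\ot\mu') = \mu\circ T\circ(\id\ot(\mu\circ T)) = \mu\circ T\circ(\id\ot\mu)\circ(\id\ot T)$. By the right-hand square of diagram \eqref{bow-tie}, $T\circ(\id\ot\mu)\circ(\id\ot T) = \bigl((\mu\ot\id)\circ\Tt - \omega\ot\id\bigr)$ (reading the commutativity of the outer right square, which says $T\circ(\mu\ot\id) = (\mu\ot\id)\circ\Tt - \omega\ot\id$ after precomposing with $\id\ot T$ — I will need to be careful about exactly which leg of the hexagon is which, but the content is that $T\circ(\id\ot\mu)\circ(\id\ot T)$ equals the lower-right composite). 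Hence $\mu'\circ(\id\ot\mu') = \mu\circ\bigl((\mu\ot\id)\circ\Tt\bigr) - \mu\circ(\omega\ot\id)$. Symmetrically, using the left-hand side of \eqref{bow-tie}, $\mu'\circ(\mu'\ot\id) = \mu\circ T\circ(\mu\ot\id)\circ(T\ot\id) = \mu\circ\bigl((\id\ot\mu)\circ\Tt\bigr) - \mu\circ(\id\ot\omega)$.

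Now associativity of $\mu$ gives $\mu\circ(\mu\ot\id)\circ\Tt = \mu\circ(\id\ot\mu)\circ\Tt$, so the two "main" terms coincide, and the difference $\mu'\circ(\mu'\ot\id) - \mu'\circ(\id\ot\mu')$ reduces to $\mu\circ(\id\ot\omega) - \mu\circ(\omega\ot\id)$, which vanishes precisely by the curvature compatibility diagram \eqref{omega}. Therefore $\mu'$ is associative.

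The main obstacle is purely bookkeeping: diagram \eqref{bow-tie} is a hexagon whose two halves must be read in the correct direction, so I must identify carefully that the clockwise-from-top-right composite $T\circ(\mu\ot\id)$ precomposed with $T\ot\id$ equals $\bigl((\id\ot\mu)\circ\Tt - \id\ot\omega\bigr)\circ\cdots$ versus the anticlockwise version, and likewise on the left; a sign or a left/right swap here would break the argument. Once the two outer squares of \eqref{bow-tie} are correctly transcribed as identities of maps $A^{\ot 3}\to A^{\ot 2}$, the rest is a two-line substitution using associativity of $\mu$ and \eqref{omega}. This mirrors \cite[Lemma~2.9]{Brz:Rot}, the only new ingredient being the curvature correction terms, which cancel by \eqref{omega}.
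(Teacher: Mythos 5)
Your proposal is correct and follows essentially the same route as the paper: rewrite $\mu\circ T\circ(\id\ot\mu\circ T)$ and $\mu\circ T\circ(\mu\circ T\ot\id)$ via the two outer squares of \eqref{bow-tie} and then cancel using associativity of $\mu$ together with diagram \eqref{omega}. Note only that you have interchanged the two squares --- the one involving $\id\ot T$ and $\id\ot\mu$ yields $(\id\ot\mu)\circ\Tt-\id\ot\omega$, not $(\mu\ot\id)\circ\Tt-\omega\ot\id$ --- but since the final cancellation rests on the symmetric identities $\mu\circ(\mu\ot\id)=\mu\circ(\id\ot\mu)$ and $\mu\circ(\omega\ot\id)=\mu\circ(\id\ot\omega)$, this mislabelling does not affect the validity of the argument.
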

 \begin{proof}
 With the help of both diagrams in Definition~\ref{def.twistor} and associativity of $\mu$ one easily computes,
 \begin{eqnarray*}
 \mu\circ T \circ (\id \ot \mu\circ T) &=& \mu \circ (\id \ot \mu)\circ \cT - \mu\circ (\id \ot \omega) \\
 & =& \mu \circ (\mu \ot\id)\circ \cT - \mu \circ (\omega \ot\id)\\
 &=& \mu \circ T\circ (\mu\circ T\ot \id),
 \end{eqnarray*}
 as required.
 \end{proof}
 
 \begin{lemma}\label{lem.RBtwistor}
 Let $(A,R,S,\omega)$ be a curved Rota-Baxter system with the curvature that satisfies \eqref{ass}, and define, for all $a,b,c\in A$,
 $$
 T(a\ot b) = R(a)\ot b + a\ot S(b), 
$$
and
$$
\cT(a\ot b\ot c) = R(a)\ot R(b)\ot c + R(a)\ot b\ot S(c) + a\ot S(b)\ot S(c).
$$
Then $T$ is a curved weak pseudotwistor with the weak companion $\cT$ and curvature $\omega$.
 \end{lemma}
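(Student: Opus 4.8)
The plan is to verify directly that, with $\mu$ the original associative product of $A$, the map $T$ together with $\cT$ and $\omega$ renders commutative the two diagrams of Definition~\ref{def.twistor}. The only algebraic input beyond bookkeeping of tensor legs is the reformulation of \eqref{r}--\eqref{s} already used in the proof of Proposition~\ref{prop.ass}: writing $a*b = R(a)b + aS(b)$, which is precisely $\mu\circ T(a\ot b)$, the defining identities become $R(a*b) = R(a)R(b) - \omega(a\ot b)$ and $S(a*b) = S(a)S(b) - \omega(a\ot b)$. I would record these two substitutions at the outset.

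Next I would dispose of the square \eqref{omega}. Unwinding definitions, $\mu\circ(\id\ot\omega)(a\ot b\ot c) = a\,\omega(b\ot c)$ while $\mu\circ(\omega\ot\id)(a\ot b\ot c) = \omega(a\ot b)\,c$, so commutativity of \eqref{omega} is literally the hypothesis \eqref{ass}. This is the sole place where \eqref{ass} is needed; the bow-tie diagram \eqref{bow-tie} will turn out to hold for an arbitrary curvature.

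Then I would check the two squares of \eqref{bow-tie} on a generic element $a\ot b\ot c$. For the left square, applying $\id\ot T$, then $\id\ot\mu$, then $T$ gives $R(a)\ot R(b)c + R(a)\ot bS(c) + a\ot S(b*c)$; substituting $S(b*c) = S(b)S(c) - \omega(b\ot c)$ turns the result into $(\id\ot\mu)\circ\cT(a\ot b\ot c) - a\ot\omega(b\ot c)$, which is exactly $\big((\id\ot\mu)\circ\cT - \id\ot\omega\big)(a\ot b\ot c)$. For the right square, applying $T\ot\id$, then $\mu\ot\id$, then $T$ gives $R(a*b)\ot c + (a*b)\ot S(c)$; substituting $R(a*b) = R(a)R(b) - \omega(a\ot b)$ rewrites this as $\big((\mu\ot\id)\circ\cT - \omega\ot\id\big)(a\ot b\ot c)$. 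Hence both squares commute, and $T$ is a curved weak pseudotwistor with weak companion $\cT$ and curvature $\omega$.

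I do not anticipate a genuine obstacle: everything reduces to direct substitution once the reformulated identities are in place, and the only point requiring care is tracking which tensor factor each occurrence of $R$, $S$ and $\omega$ acts on. As a consistency check, composing the verified diagrams with $\mu$ and invoking Lemma~\ref{lemma.twistor} recovers the associativity of $*$ from Proposition~\ref{prop.ass}, since $\mu\circ T = *$.
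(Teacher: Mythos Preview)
Your proof is correct and follows essentially the same route as the paper: you identify diagram~\eqref{omega} with condition~\eqref{ass}, then verify the two squares of~\eqref{bow-tie} by direct substitution using the reformulated identities $R(a*b)=R(a)R(b)-\omega(a\ot b)$ and $S(a*b)=S(a)S(c)-\omega(b\ot c)$, exactly as the paper does (the paper spells out only the left square and leaves the right one as ``similar''). Your remark that \eqref{ass} is used only for diagram~\eqref{omega}, so that \eqref{bow-tie} holds for arbitrary $\omega$, is a correct and worthwhile observation not made explicit in the paper.
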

 \begin{proof}
 The commutativity of diagram \eqref{omega} is equivalent to \eqref{ass}. To check the commutativity of the left square in diagram \eqref{bow-tie}, let us take any $a,b,c \in A$ and compute
 \begin{eqnarray*}
 T\circ (\id\ot \mu\circ T)(a\ot b\ot c) &=& T(a\ot R(b)c + a\ot bS(c))\\
 &=& R(a)\ot (R(b)c + bS(c)) + a\ot S(R(b)c + bS(c))\\
 &=& R(a)\ot R(b)c + R(a)\ot  bS(c) + a\ot S(b)S(c)\\
 && - a\ot \omega(b\ot c)\\
 &=& ((\id \ot \mu)\circ \Tt - \id\ot \omega)(a\ot b\ot c),
 \end{eqnarray*}
 where the condition \eqref{s} has been used in the derivation of the third equality.
 The commutativity of the right square in diagram \eqref{bow-tie} is checked in a similar way.
 \end{proof}
 
 It is clear that Proposition~\ref{prop.ass} can be understood as a consequence of Lemma~\ref{lemma.twistor} and Lemma~\ref{lem.RBtwistor}.
 
\section{Curved Rota-Baxter systems and curved differential graded algebras}

A triple $(A,d,\omega)$ consisting of an $\NN$-graded algebra $A = \oplus_{n\in \NN} A^n$, a degree-one graded derivation $d$ of $A$, and $\omega \in A^2$ such that, for all $a\in A$, 
$$
d\circ d (a) = [\omega, a], \qquad d(\omega) =0,
$$
where $[-,-]$ denotes the (graded) commutator, is called a {\em curved differential graded algebra}; see \cite{GetJon:alg}, \cite{Pos:non}.

\begin{proposition}\label{prop.cdga}
 Let $(A,R,S,\omega)$ be a curved Rota-Baxter system with the curvature satisfying \eqref{ass}. For all $n\in \NN$, set 
$$
\Omega^n(A) = \rhom{\KK} {A^{\ot n}} A,
$$
and view $\Omega(A) = \oplus_{n\in \NN} \Omega^n(A)$ as a graded algebra via
$$
(f g) (a_1,\ldots, a_{m+n}) = f(a_1,\ldots , a_m)g(a_{m+1}, \ldots, a_{m+n}),
$$
for all $f\in \Omega^m(A)$ and $g\in \Omega^n(A)$. 
Define the map $d: \Omega^n(A) \to \Omega^{n+1}(A)$ by
\begin{eqnarray}
d(f)(a_0, a_1\ldots , a_n) &=& R(a_0)f(a_1,\ldots , a_n) + \sum_{k=1}^{n-1} (-1)^k f(a_0, \ldots , a_{k-1}*a_k, \ldots , a_n ) \nonumber \\
&&+ (-1)^n f(a_0,\ldots , a_{n-1})S(a_n), \label{d}
\end{eqnarray}
for all $f \in \Omega^n(A)$, where $*$ is the product defined by \eqref{prod}. Then:
\begin{zlist}
\item For all $f \in \Omega^n(A)$,
$$
d\circ d (f)  = [\omega , f].
$$
\item If $S=R$ and $\omega$ is an $A$-bimodule map, then $(\Omega(A), d, \omega)$ is a curved differential graded algebra. 
\end{zlist}
\end{proposition}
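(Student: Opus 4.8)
The plan is to read part~(1) as a curved analogue of the classical identity $\delta\circ\delta=0$ for the Hochschild cochain complex, and part~(2) as two further verifications --- the graded Leibniz rule and $d\omega=0$ --- that bring in the extra hypotheses $S=R$ and ``$\omega$ an $A$-bimodule map''.

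For~(1), I would first observe that $d$ is precisely the Hochschild-type coboundary for $A$ regarded as a curved bimodule over $(A,*)$, with left action $R(a)m$, right action $mS(a)$, and internal multiplications replaced by $*$. By Proposition~\ref{prop.ass} and the standing hypothesis~\eqref{ass}, the product $*$ is associative, while \eqref{r} and~\eqref{s} assert exactly that the two actions fail to be associative by the same amount, $R(a)R(b)-R(a*b)=\omega(a\ot b)=S(a)S(b)-S(a*b)$. I would then substitute \eqref{d} into $d\circ d(f)(a_0,\dots,a_{n+1})$ twice and collect the resulting terms as in the classical proof that $\delta^2=0$: the merge--merge pairs cancel by associativity of $*$, the cross term $R(a_0)(\cdots)S(a_{n+1})$ cancels by associativity of the product of $A$, and each pair in which a merge passes a boundary action cancels trivially. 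The only pairs left over are the two ``quadratic'' boundary pairs, $R(a_0)R(a_1)f(a_2,\dots,a_{n+1})$ against $R(a_0*a_1)f(a_2,\dots,a_{n+1})$ and its mirror image at the right-hand end; by \eqref{r} and~\eqref{s} these contribute $\omega(a_0\ot a_1)f(a_2,\dots,a_{n+1})-f(a_0,\dots,a_{n-1})\omega(a_n\ot a_{n+1})$, which is exactly $(\omega f-f\omega)(a_0,\dots,a_{n+1})$; since $\omega$ has even degree this equals $[\omega,f](a_0,\dots,a_{n+1})$. This establishes~(1) and uses only~\eqref{ass}, not $S=R$.

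For~(2), part~(1) already gives $d\circ d=[\omega,-]$ and $\omega\in\Omega^2(A)$, so what remains is to show that, when $S=R$ and $\omega$ is an $A$-bimodule map, $d$ is a degree-one graded derivation of the cup product and $d\omega=0$. For the Leibniz rule I would expand $d(fg)$ and $d(f)g+(-1)^{|f|}f\,d(g)$ and match them term by term, exactly as in the proof that the ordinary Hochschild cochain algebra is a differential graded algebra under the cup product: the $R$-term, the merges internal to the $f$-block or to the $g$-block, the merge straddling the two blocks, and the $S$-term all pair off. The only contributions not accounted for are the ``junction'' terms, $f(a_0,\dots,a_{m-1})S(a_m)g(a_{m+1},\dots)$ from the $S$-face of $d(f)$ and $f(a_0,\dots,a_{m-1})R(a_m)g(a_{m+1},\dots)$ from the $R$-face of $d(g)$, which (with the Koszul sign) enter with opposite signs and cancel precisely because $S=R$; there is no matching term in $d(fg)$, just as classically. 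For $d\omega=0$ I would first note that an $A$-bimodule map $\omega$ satisfies $\omega(ab\ot c)=a\,\omega(b\ot c)$ and $\omega(a\ot bc)=\omega(a\ot b)c$, so together with~\eqref{ass} one gets $\omega(ab\ot c)=\omega(a\ot bc)$: $\omega$ is $A$-balanced. Then, writing out $d\omega\in\Omega^3(A)$ from~\eqref{d} with $a*b=R(a)b+aR(b)$ and using left/right $A$-linearity of $\omega$, the terms $R(a_0)\omega(a_1\ot a_2)$ and $\omega(a_0\ot a_1)R(a_2)$ cancel against pieces of the two merge terms, leaving $-\omega(a_0R(a_1)\ot a_2)+\omega(a_0\ot R(a_1)a_2)$, which vanishes by the balanced identity. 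Hence $d\omega=0$, and combining with~(1) shows $(\Omega(A),d,\omega)$ is a curved differential graded algebra.

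The substance of the argument is the bookkeeping: in~(1), correctly pairing up the roughly $(n+1)(n+2)$ terms of $d\circ d(f)$ and confirming that the only survivors are the two boundary pairs governed by~\eqref{r}--\eqref{s} --- this is where I expect the main care to be needed; and in~(2)(a), checking that the single failure of the classical term-by-term matching is exactly the junction pair that $S=R$ annihilates, so that no further hypothesis on $\omega$ is required for the derivation property. Once the observation that $\omega$ is $A$-balanced is in hand, (2)(b) is a short direct computation.
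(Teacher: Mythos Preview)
Your proposal is correct and follows essentially the same route as the paper's proof: for (1) you expand $d\circ d(f)$, cancel the merge--merge pairs via associativity of $*$ and the mixed terms by sign, and identify the two surviving boundary pairs via \eqref{r}--\eqref{s}; for (2) you verify the graded Leibniz rule and $d\omega=0$ directly, first deducing the balanced identity $\omega(ab\ot c)=\omega(a\ot bc)$ from bilinearity and \eqref{ass}. Your account is in fact more detailed than the paper's --- the paper dismisses the Leibniz rule as ``a straightforward calculation'', whereas you correctly pinpoint the junction pair $f(\ldots)S(a_m)g(\ldots)$ versus $f(\ldots)R(a_m)g(\ldots)$ as the sole obstruction, annihilated exactly by $S=R$.
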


\begin{proof}
(1) Since the product $*$ is associative the repeated application of $d$ to $f \in \Omega^n(A)$ yields  cancellation of all terms that involve the $*$-product of arguments in $f$, and so one is left with
\begin{eqnarray*}
d\circ d (f)(a_0, a_1\ldots , a_{n+1}) &=& R(a_0)R(a_1) f(a_2,\ldots , a_{n+1}) - R(a_0*a_1) f(a_2,\ldots , a_{n+1}) \\
&&\hspace{-1cm} + f(a_0,\ldots , a_{n-1})S(a_n*a_{n+1}) - f(a_0,\ldots , a_{n-1})S(a_n)S(a_{n+1})\\
&=& (\omega f - f\omega)(a_0, a_1\ldots , a_{n+1}),
\end{eqnarray*}
by equations \eqref{r}--\eqref{s}.

(2) A straightforward calculation shows that if $R=S$, then $d$ is a graded derivation. Furthermore, since $\omega$ is an $A$-bimodule map and it satisfies \eqref{ass}, for all $a,b,c\in A$,
$$
\omega(ab\ot c) = \omega(a\ot bc).
$$
Hence,
\begin{eqnarray*}
d(\omega)(a,b,c) &=& R(a)\omega(b\ot c) - \omega(a*b\ot c) + \omega(a\ot b*c) - \omega(a\ot b) R(c)\\
&=& \omega(R(a)b\ot c) - \omega(R(a)b\ot c) - \omega(aR(b)\ot c) \\
&&+ \omega(a\ot R(b)c) + \omega(a\ot bR(c)) - \omega(a\ot bR(c)) =0.
\end{eqnarray*}
Therefore, $(\Omega(A), d,\omega)$ is a curved differential graded algebra as stated.
\end{proof}

Thus, in particular, if $A$ is a unital algebra and hence the curvature has the form \eqref{unital}, and if further $R=S$, then $(\Omega(A), d,\omega)$ is a curved differential graded algebra.

\begin{remark}\label{rem.deform}
Given an associative algebra $A$ with product $\mu$, and a linear map $\omega:A\ot A\to A$, for all $\lambda \in \KK$ one can consider deformation of the product
$$
\mu_{\omega,\lambda} := \mu + \lambda \omega.
$$
Following \cite{Ger:def}, the product $\mu_{\omega,\lambda}$ is associative up to the terms of order $\lambda$ or {\em infinitesimally associative} provided $\omega$ satisfies the cocycle condition, for all $a,b,c \in A$,
$$
a\omega(b\ot c) - \omega(ab\ot c) + \omega(a\ot bc) - \omega(a\ot b)c =0.
$$
If $(R,R,\omega)$ is a curved Rota-Baxter system on $A$ that satisfies assumptions of Proposition~\ref{prop.cdga}, i.e.\ the map $\omega$ is an $A$-bimodule homomorphism satisfying \eqref{ass}, then $\omega$ is a cocycle and hence the product $\mu_{\omega,\lambda}$ is infinitesimally associative.
\end{remark}

\section{Curved Rota-Baxter systems and pre-Lie algebras}
Recall from \cite{Ger:coh} that a vector space $A$ together with an operation $\circ : A\ot A\to A$ such that, for all $a,b,c\in A$,
\begin{equation}\label{pre}
(a\circ b - b\circ a)\circ c = a\circ (b\circ c) - b\circ (a\circ c),
\end{equation}
is called a {\em left pre-Lie algebra}.

\begin{proposition}\label{pre-Lie}
Let $(A,R,S,\omega)$ be a curved Rota-Baxter system. Then $A$ with operation $\circ$ defined by
$$
a\circ b = R(a)b - bS(a),
$$
is a left pre-Lie algebra if and only if, for all $a,b\in A$, 
$$
\omega(a\ot b) - \omega(b\ot a)
$$
is in the centre of $A$.
\end{proposition}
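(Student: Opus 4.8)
The plan is to verify the left pre-Lie identity \eqref{pre} for the operation $a\circ b = R(a)b - bS(a)$ by direct computation, reducing the vanishing of the associator-like expression to a condition on $\omega$. First I would expand each of the four triple products $(a\circ b)\circ c$, $(b\circ a)\circ c$, $a\circ(b\circ c)$ and $b\circ(a\circ c)$ using the definition of $\circ$. For instance $(a\circ b)\circ c = R(R(a)b - bS(a))c - c S(R(a)b - bS(a))$, and I would apply the defining identities \eqref{r} and \eqref{s} to rewrite $R(R(a)b - bS(a))$ and $S(R(a)b-bS(a))$. The key observation is that \eqref{r} gives $R(R(a)b + aS(b)) = R(a)R(b) - \omega(a\ot b)$, so $R(R(a)b) = R(a)R(b) - \omega(a\ot b) - R(aS(b))$ is not directly available; instead it is cleaner to keep expressions in the form $R(a)b + aS(b)$ wherever possible. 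A better route: note $R(a)b - bS(a)$ is \emph{not} of that shape, so I would instead substitute directly and let the associativity of the underlying product of $A$ do the bookkeeping.

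The core of the argument: after full expansion, the left-hand side of \eqref{pre} minus the right-hand side is a sum of terms, each a product (in $A$) of an $R$-value, an $S$-value, or a plain element, applied to $a,b,c$. The terms not involving $\omega$ are exactly those that appear in the \emph{Rota-Baxter} case, and by \cite{Brz:Rot} (or a direct check) they already cancel because $A$ is associative and because \eqref{r}, \eqref{s} are being applied symmetrically in the two occurrences $(a\circ b)\circ c$ versus $a\circ(b\circ c)$ and the swapped pair. What remains are the curvature contributions. Applying \eqref{r} to the outer $R$ in $(a\circ b)\circ c$ and $(b\circ a)\circ c$ produces, respectively, a term $-\omega(R(a)b - bS(a)\text{-ish combination})$ — more precisely, writing $x\ast y = R(x)y + xS(y)$ as in \eqref{prod}, one has $R(a)R(b) = R(a\ast b) + \omega(a\ot b)$ and $S(a)S(b) = S(a\ast b)+\omega(a\ot b)$; I would massage the expansion so these are the only places \eqref{r}, \eqref{s} are invoked. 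Collecting, the difference LHS $-$ RHS of \eqref{pre} collapses to
$$
\bigl(\omega(a\ot b) - \omega(b\ot a)\bigr)c - c\bigl(\omega(a\ot b) - \omega(b\ot a)\bigr),
$$
i.e. the commutator $[\,\omega(a\ot b)-\omega(b\ot a),\, c\,]$ in $A$. This vanishes for all $a,b,c$ precisely when $\omega(a\ot b)-\omega(b\ot a)$ is central, which is the claimed condition.

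The main obstacle I anticipate is purely organisational: the brute-force expansion of \eqref{pre} for this $\circ$ generates on the order of a dozen terms on each side, and keeping track of which cancel by associativity of $A$ alone versus which require \eqref{r} or \eqref{s} is error-prone. The trick that makes it tractable is to observe that $a\circ b = a\ast b - bS(a) - aS(b) = R(a)b - bS(a)$, so one can relate $\circ$ to the associative product $\ast$ of Proposition~\ref{prop.ass} only if \eqref{ass} holds — but here we do \emph{not} assume \eqref{ass}, so that shortcut is unavailable and one genuinely must compute with $\circ$ directly. Hence I would be careful to apply \eqref{r}, \eqref{s} only to the compound arguments $R(x)y + xS(y)$ that actually arise (the outer operator in each $(\,\cdot\,)\circ c$ term after the first expansion), never trying to split $R$ or $S$ across a sum, and then let the remaining simplification be sheer associativity in $A$. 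Once the bookkeeping is done the identification with the commutator $[\omega(a\ot b)-\omega(b\ot a),c]$ is immediate, and both directions of the ``if and only if'' follow at once since $c$ ranges over all of $A$.
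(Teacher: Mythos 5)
Your proposal is correct and follows essentially the same route as the paper: expand $(a\circ b-b\circ a)\circ c$ and regroup so that \eqref{r}, \eqref{s} apply to the combinations $R(x)y+xS(y)$, note that $a\circ(b\circ c)-b\circ(a\circ c)$ simplifies by associativity alone, and reduce the difference to the commutator of $\omega(a\ot b)-\omega(b\ot a)$ with $c$. The only discrepancy is an immaterial overall sign on that commutator, which does not affect the centrality criterion.
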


\begin{proof}
Starting with the left hand side of the defining equality for a pre-Lie algebra \eqref{pre}, we find, for all $a,b,c\in A$,
\begin{eqnarray*}
(a\circ b - b\circ a)\circ c &=& (R(a)b - bS(a) - R(b)a + aS(b))\circ c\\
&=& R(R(a)b)c - cS(R(a)b) - R(bS(a))c + cS(bS(a))\\
&&-R(R(b)a)c + cS(R(b)a) + R(aS(b))c - cS(aS(b))\\
&=& (R(a)R(b) - R(b)R(a)) c - c(S(a)S(b) -S(b)S(a))\\
&& + [c, \omega(a\ot b) - \omega(b\ot a)].
\end{eqnarray*}
On the other hand,
\begin{eqnarray*}
 a\circ (b\circ c) - b\circ (a\circ c) &=& R(a)(b\circ c) - (b\circ c) S(a) - R(b)(a\circ c) + (a\circ c) S(b)\\
 &=& R(a)R(b)c - R(a)cS(b) -R(b)c S(a) + cS(b)S(a)\\
 && - R(b)R(a) c + R(b)c S(a) + R(a)cS(b) - cS(a)S(b)\\
 &=& (R(a)R(b) - R(b)R(a)) c - c(S(a)S(b) -S(b)S(a)).
\end{eqnarray*}
Therefore, the operation $\circ$ makes $A$ into a pre-Lie algebra if and only if the commutator of $\omega(a\ot b) - \omega(b\ot a)$ with  all $c\in A$ vanishes, i.e.\ if and only if $
\omega(a\ot b) - \omega(b\ot a)
$ is a central element of $A$.
\end{proof}

Thus, in particular, if $\omega$ is a symmetric bilinear $A$-valued form on $A$, then $(A,\circ)$ is a left pre-Lie algebra.

We conclude this note by an example of a curved Rota-Baxter system that leads to a left pre-Lie algebra structure.

\begin{example} For an algebra $A$, let us take 
$$
r,s \in (A\ot A)^A := \{ x\in A\ot A \; |\; \forall a\in A, \; ax=xa\},
$$
write
$$
r= \sum r\su 1\ot r\su2, \qquad s = \sum s\su 1\ot s\su 2,
$$
and define an operation $\circ: A\ot A \to A$, by
\begin{equation}\label{circ}
a\circ b = \sum (r\su 1 a r\su2 - s\su 1a s\su 2)b.
\end{equation}
Then $(A,\circ)$ is a left pre-Lie algebra.
\end{example}

\begin{proof}
With the aid of $r,s\in (A\ot A)^A$ we can define linear maps $R,S: A\to A$ and $\omega :A\ot A\to A$ by
$$
R(a) = \sum r\su 1a r\su2, \qquad S(a) = \sum s\su 1a s\su2, \qquad  \omega(a\ot b) = -\sum r\su 1a r\su2s\su 1b s\su2.
$$
Notie that since $r,s \in (A\ot A)^A$, $R(a)$, $S(a)$ and $\omega(a\ot b)$ are all in the centre of $A$, and hence the formula \eqref{circ} can be written as
$$
a\circ b = R(a) b - bS(a).
$$
Writing $\sum \tilde{r}\su 1a \tilde{r}\su2$ for the second copy of $r$ and using the centrality of both $r$ and $s$, we can compute
\begin{eqnarray*}
R(a)R(b) - \omega(a\ot b) &=& \sum r\su 1a r\su2\tilde{r}\su 1b \tilde{r}\su2 + \sum r\su 1a r\su2s\su 1b s\su2\\
&=&  \sum \tilde{r}\su 1r\su 1a r\su2b \tilde{r}\su2 + \sum r\su 1a s\su 1b s\su2r\su2 \\
&=& R(R(a)b + aS(b)).
\end{eqnarray*}
By a similar computation one finds that also equation \eqref{s} is satisfied, and hence $(A,\circ)$ is a left pre-Lie algebra by Proposition~\ref{pre-Lie}.
\end{proof}


\begin{thebibliography}{99}{} 


\bibitem{Agu:inf}
M.\ Aguiar, {\em Infinitesimal Hopf algebras,} [in:]  {\em New trends in Hopf algebra theory (La Falda, 1999)}, Contemp.\ Math., {\bf 267}, Amer.\ Math.\ Soc., Providence, RI, (2000), pp.\ 1--29.

\bibitem{Agu:info}
M.\ Aguiar, {\em Infinitesimal bialgebras, pre-Lie and dendriform algebras}, [in:]  {\em Hopf Algebras}, Lecture Notes in Pure and Appl.\ Math., {\bf 237}, Dekker, New York (2004), pp.\ 1--33.
%
\bibitem{Bax:ana}  
G.\ Baxter, {\em An analytic problem whose solution follows from a simple algebraic identity}, Pacific J.\ Math.\ {\bf 10} (1960), 731--742. 

\bibitem{Brz:Rot} T.\ Brzezi\'nski, {\em Rota-Baxter systems, dendriform algebras and covariant bialgebras,} preprint arXiv:1503.05073 (2015).

%
\bibitem{Ebr:Lod} K.\ Ebrahimi-Fard, {\em Loday-type algebras and the Rota-Baxter relation,}  
Lett.\ Math.\ Phys.\ {\bf 61} (2002), 139--147.

\bibitem{EbrGuo:Rot} K.\ Ebrahimi-Fard \& L.\ Guo, {\em Rota-Baxter algebras in renormalization of perturbative quantum field theory,} [in:]  {\em Universality and renormalization},  Fields Inst.\ Commun.\ {\bf 50}, Amer.\ Math.\ Soc., Providence, RI, 2007, pp.\ 47--105.


\bibitem{Ger:coh} M.\ Gerstenhaber, {\em The cohomology structure of an associative ring}, Ann.\ Math.\ {\bf 78} (1963), 267--288.

\bibitem{Ger:def} M.\ Gerstenhaber, {\em  On the deformation of rings and algebras}, Ann.\ Math.\ {\bf 79} (1964), 59--103.

\bibitem{GetJon:alg} E.\ Getzler \& J.D.S.\ Jones, {\em $A_\infty$-algebras and the cyclic bar complex}, {Illinois J.\ Math.} {\bf 34} (1990), 256--283.

\bibitem{Guo:wha} L.\ Guo, {\em What is... a Rota-Baxter algebra?} Notices Amer.\ Math.\ Soc.\ {\bf 56} (2009), 1436--1437.

\bibitem{Lod:dia} J.-L.\ Loday, {\em Dialgebras}, [in:] {\em Dialgebras and Related Operads}, Lecture Notes in Math.\ {\bf 1763}, Springer, Berlin (2001) pp.\ 7--66.

\bibitem{PanVan:twi} F.\ Panaite \& F.\ Van Oystaeyen, {\em Twisted algebras, twisted bialgebras and Rota-Baxter operators,} arXiv:1502.05327 (2015).



\bibitem{Pos:non} L.\ Positselski, {\em Nonhomogeneous quadratic duality and curvature}, {Funct.\ Anal.\ Appl.} {\bf 27} (1993), 197--204.


\bibitem{Rot:Bax} G.-C.\ Rota, {\em  Baxter algebras and combinatorial identities.\ I, II}, 
Bull.\ Amer.\ Math.\ Soc.\ {\bf 75} (1969), 325--329, 330--334.



\bibitem{Sza:cla} B.\ Szablikowski, {\em Classical r-matrix like approach to Frobenius manifolds, WDVV equations and flat metrics}, J.\ Phys.\ A: Math.\ Theor., {\bf 48} (2015),  art.\ 315203.

\end{thebibliography}
\end{document}